\newtheorem{theorem}{Theorem}[section]
\newtheorem{corollary}{Corollary}[section]
\newtheorem{proposition}{Proposition}[section]
\theoremstyle{definition}
\newtheorem{definition}{Definition}[section]
\title{Graph Shadows and Edge-Regular Graphs}
\author{
        Jared DeLeo\footnote{Auburn University, jmd0150@auburn.edu}\\
        Department of Mathematics and Statistics\\
        Auburn University\\
        Auburn, AL 36830
        }
\date{April 11, 2025}
\begin{document}

\maketitle

{\sc Abstract:} The definition of edge-regularity in graphs is a relaxation of the definition of strong regularity, so strongly regular graphs are edge-regular and, not surprisingly, the family of edge-regular graphs is much larger and more diverse than that of the strongly regular.

In \cite{deleo}, a few methods of constructing new graphs from old are of use. One of these is the unary ``graph shadow'' operation. Here, this operation is generalized, and then generalized again, and conditions are given under which application of the new operations to edge-regular graphs result in edge-regular graphs. Also, some attention to strongly regular graphs is given.\\

\noindent Keywords and phrases: strongly regular, edge-regular, shadow graph

\section{Introduction}

An \textit{edge-regular} graph is a regular graph $G$ such that for some $\lambda \geq 0$, for all $uv \in E(G)$, $|N(u) \cap N(v)| = \lambda$. The set of graphs on $n$ vertices, regular of degree $d$, satisfying the edge-regularity requirement with parameter $\lambda$, will be denoted $ER(n,d,\lambda)$. A \textit{strongly regular} graph is an edge-regular graph $G$ in which for some $\mu \geq 0$, for all $x,y \in V(G), x \not= y$, such that $xy \not\in E(G)$, $|N(x) \cap N(y)| = \mu$. The set of graphs in $ER(n,d,\lambda)$ satisfying the additional strong regularity requirement with parameter $\mu$ will be denoted $SR(n,d,\lambda,\mu)$.

Edge-regular graphs with fixed $\lambda$ have been studied by Glorioso \cite{Glorioso} (when $\lambda = 2$), Bragan \cite{Bragan} (when $\lambda = 1$), and Guest et al. \cite{RCA} (when $\lambda = 1$ and some cases when $\lambda > 1$). All of these publications include constructions for edge-regular graphs with the given $\lambda$ value, with a particular emphasis on RCA graphs, which are edge-regular graphs in which every maximal clique is maximum. Further, edge-regular graphs satisfying $d - \lambda \leq 3$ have been fully characterized by Johnson, Myrvold, and Roblee \cite{ExtremalER}.

Another topic in interest of edge-regular graphs, is using graph products to produce new edge-regular graphs from old. Glorioso \cite{Glorioso} fully characterized the  edge-regular Cartesian, Tensor, Strong, and Lexicographic products of edge-regular graphs. The author expanded upon the Cartesian and Tensor products in \cite{deleo} to characterize preservation by these products of edge-regular graphs with a uniform shared neighborhood structure (USNS).

A different type of graph operation, the \textit{shadow} of a graph, is formally defined and partially studied by the author in \cite{deleo} and by Asmiati et al. in \cite{Shadow}. The goal of this paper is to generalize the definition of the shadow of a graph as a graph operation, and to determine when the generalized operation preserves regularity, edge-regularity, and strong regularity of finite, simple graphs.

\section{$(m,x)$-Shadows}

We define the graph shadow operation below. In this definition, and throughout this paper, when $u \in V(G)$ and $G_1, \dots, G_m$ are copies of $G$, the vertex of $G_i$ playing the role of $u$ will be denoted $u_i$.

For a positive integer $x$ and $v \in V(G)$, $N_G^x(v)$ is the $x$-distance neighborhood of vertex $v$ in $G$. If no graph $G$ is specified, then it will be apparent from the context in what graph the $x$-distance neighborhood is being considered. If no $x$ is specified, then $x$ is understood to be 1. For an example, refer to figure \ref{fig:D23Of4Cycle}.

\begin{definition}\label{def: Shadow}
    Given a finite graph $G$ and $x \geq 1$, $m \geq 2$, the $(m,x)$\textit{-shadow of} $G$, denoted $D_m^x(G)$, is the simple graph whose vertices are in $m$ distinct copies of $G$, say $G_1, G_2, \dots, G_m$; $V(D_m^x(G)) = \bigcup_{i=1}^{m}V(G_i)$ and the edge set is $E(D_m^x(G)) = \{u_iv_j | 1 \leq i < j \leq m, u_i \in V(G_i), v_j \in V(G_j), u \in N_G^x(v)\} \cup \{u_iv_i | 1 \leq i \leq m, u_i,v_i \in V(G_i), uv \in E(G)\}$.
\end{definition}

\begin{figure}[h]
    \centering
    \includegraphics[width=.2\paperwidth]{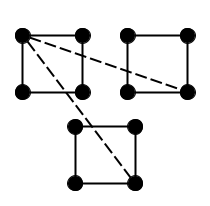}
    \hspace{2cm}
    \includegraphics[width=.2\paperwidth]{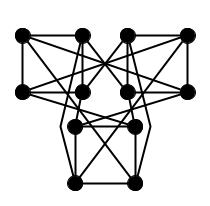}
    \caption{Distance 2 vertices from one vertex of $C_4$ to 2 other copies of $C_4$ (left).
    Edges are added between distance 2 vertices in different copies of $C_4$ to obtain $D_3^2(C_4)$ (right).}
    \label{fig:D23Of4Cycle}
\end{figure}
\begin{theorem}\label{thm:charShadowER}
    Given $G \in ER(n,d,\lambda)$, then for $x > 0$ and $m > 1$, $D_m^x(G)$ is edge- regular if and only if the following conditions hold for some nonnegative integers $d_x, \lambda_x$:\\

    1. For all $v \in V(G)$, $|N^x(v)| = d_x$.
    
    2. For all $u,v \in V(G)$ such that $u \sim v$ in $G$, $|N^x(u) \cap N^x(v)| = \lambda_x$.
    
    3. For all $v,w \in V(G)$ such that $w \in N^x(v)$,\\ $|N^x(v) \cap N(w)| + |N(v) \cap N^x(w)| + (m-2)|N^x(v) \cap N^x(w)| = \lambda + (m-1)\lambda_x$
\end{theorem}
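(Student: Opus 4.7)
The plan is to compute, for each edge of $D_m^x(G)$, both the degree of an endpoint and the number of common neighbors of the two endpoints, and then read off which uniform-count conditions on $G$ force each of conditions 1, 2, 3. Edges of $D_m^x(G)$ come in two flavours: \emph{within-copy} edges $u_iv_i$ arising from an edge $uv\in E(G)$, and \emph{cross-copy} edges $u_iv_j$ with $i\neq j$ and $v\in N^x(u)$ in $G$.

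Regularity is handled first. The neighbors of a vertex $u_i\in V(D_m^x(G))$ are the $d$ vertices $w_i$ with $w\in N_G(u)$, together with, for each of the $m-1$ other copies $G_j$, the vertices $w_j$ with $w\in N^x(u)$. Thus $\deg(u_i)=d+(m-1)|N^x(u)|$, and $D_m^x(G)$ is regular if and only if $|N^x(u)|$ is independent of $u$, which is condition 1 with common value $d_x$ and total degree $d+(m-1)d_x$.

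For a within-copy edge $u_iv_i$, partitioning the common neighborhood by which copy contains it gives $\lambda$ from $G_i$ (by edge-regularity of $G$) and $|N^x(u)\cap N^x(v)|$ from each of the other $m-1$ copies. Constancy of this count over all within-copy edges is exactly condition 2, and then the common count is $\lambda+(m-1)\lambda_x$. For a cross-copy edge $u_iv_j$ with $v\in N^x(u)$, the same partitioning now splits three ways: $G_i$ contributes $|N(u)\cap N^x(v)|$, since $u_i$'s neighbors in $G_i$ come from $N_G(u)$ while $v_j$'s neighbors in $G_i$ come from $N^x_G(v)$ because $i\neq j$; $G_j$ contributes the symmetric $|N^x(u)\cap N(v)|$; and each of the remaining $m-2$ copies contributes $|N^x(u)\cap N^x(v)|$. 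Requiring this total to equal $\lambda+(m-1)\lambda_x$ is condition 3. The converse is automatic: under conditions 1--3 the same three computations show every edge has endpoint degree $d+(m-1)d_x$ and exactly $\lambda+(m-1)\lambda_x$ common neighbors, so $D_m^x(G)$ is edge-regular.

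There is no genuine obstacle, only bookkeeping. The one place that demands attention is the three-way index split $k=i$, $k=j$, $k\notin\{i,j\}$ in the cross-copy calculation: the asymmetry between an endpoint's own copy, where its neighbors are taken in $G$, and a copy to which it connects only via $N^x$, is what produces the mixed $N\cap N^x$ terms that make condition 3 look heavier than conditions 1 and 2.
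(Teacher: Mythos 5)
Your proposal is correct and follows essentially the same route as the paper's proof: compute the degree $d+(m-1)|N^x(u)|$ for regularity, then partition the common neighborhood of an edge's endpoints by copy, treating within-copy and cross-copy edges separately to obtain conditions 2 and 3 respectively. Your explicit handling of the three-way index split and the converse is, if anything, slightly more careful than the paper's, which leaves the converse as ``straightforward.''
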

\begin{proof}
    Suppose $G \in ER(n,d,\lambda)$ and $D_m^x(G)$ is edge-regular, where $m \geq 2$ and $x \geq 1$. Define $d_x(v) = |N^x(v)|$. Then for a vertex $v \in V(D_m^x(G))$, $\deg(v) = d + (m-1)d_x(v)$. As $D_m^x(G)$ is edge-regular by assumption, then $\deg(u) = \deg(v)$ for all $u,v \in V(D_m^x(G))$. So, $d + (m-1)d_x(u) = d + (m-1)d_x(v)$ implies that $d_x(u) = d_x(v) = d_x$ for some constant $d_x$, so condition 1 is met.

    Now define $\lambda_x(u,v) = |N^x(u) \cap N^x(v)|$. Then for $u,v \in V(G)$ such that $u$ and $v$ are adjacent, $|N_{D_m^x(G)}(u) \cap N_{D_m^x(G)}(v)| = \lambda + (m-1)\lambda_x(u,v)$. As $D_m^x(G)$ is edge-regular by assumption, then for all $u,v,y,z \in V(D_m^x(G))$ such that $u$ is adjacent to $v$ and $y$ is adjacent to $z$, $|N_{D_m^x(G)}(u) \cap N_{D_m^x(G)}(v)| = |N_{D_m^x(G)}(y) \cap N_{D_m^x(G)}(z)|$. So, $\lambda + (m-1)\lambda_x(u,v) = \lambda + (m-1)\lambda_x(y,z)$ implies that $\lambda_x(u,v) = \lambda_x(y,z) = \lambda_x$ for some constant $\lambda_x$, so condition 2 is met.

    Now consider adjacent vertices in $D_m^x(G)$ in different copies of $G$, say $v$ and $w'$, where $w'$ is a copy of a vertex $w \in N_G^x(v)$. Then $v$ and $w'$ share $|N_G(v) \cap N_G^x(w)|$ vertices in the copy of $G$ containing $v$. Likewise, $|N_G^x(v) \cap N_G(w)|$ vertices are shared in the copy of $G$ containing $w'$. In each of the remaining $m-2$ copies of $G$, $v$ and $w'$ share $|N_G^x(v) \cap N_G^x(w)|$ vertices. As $D_m^x(G)$ is edge-regular by assumption, then $|N_{D_m^x(G)}(v) \cap N_{D_m^x(G)}(w')|$ is equal to the number of vertices shared in $D_m^x(G)$ by two adjacent vertices in the same copy of $G$. So, $|N_{D_m^x(G)}(v) \cap N_{D_m^x(G)}(w')| = |N_G(v) \cap N_G^x(w)| + |N_G^x(v) \cap N_G(w)| + (m-2)|N_{D_m^x(G)}(v) \cap N_{D_m^x(G)}(w')| = \lambda + (m-1)\lambda_x$ by condition 2. Thus, all conditions are met.

    The proof of the converse is straightforward, using the same arguments as in the forward direction.
\end{proof}

\cref{thm:charShadowER} generalizes one implication of a result in \cite{deleo}, which asserts that when $x = 1$, $D_m^1(G)$ is edge-regular if $G$ is edge-regular. When $x = 1$ and $G \in ER(n,d,\lambda)$, clearly conditions 1, 2, and 3 hold for any $m$ with $d_1 = d$ and $\lambda_1 = \lambda$.

But for $m,x > 1$, the edge-regularity of $G$ does not imply the edge-regularity of $D_m^x(G)$. An example is given in figure \ref{fig:D22Of5Cycle}. In this example, condition 3 of \cref{thm:charShadowER} does not hold.

\begin{figure}[h]
    \centering
    \includegraphics[width = .08\paperwidth]{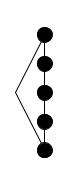}
    \hspace{2cm}
    \includegraphics[width = .16\paperwidth]{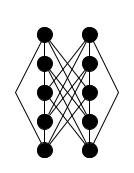}
    \caption{$C_5 \in ER(5,2,0)$ (left) and $D_2^2(C_5)$ which is not edge-regular (right).}
    \label{fig:D22Of5Cycle}
\end{figure}

In the case $x = 0$, if we agree that for each $v \in V(G)$, $N^0(v) = \{v\}$, and read the definition of adjacency in $D_m^0(G)$ as it was for $D_m^x(G)$, $x > 0$, then we see that for each $v \in V(G)$, each of its clones in any of the $m$ copies of $G$ constituting $D_m^0(G)$ is adjacent to each of the $m-1$ other clones of $v$ in the other copies of $G$, and to no other vertices in those other copies. We enshrine this description in the following proposition.

\begin{proposition}\label{prop:Cartesian}
    For each graph $G$ and integer $m \geq 1$, $D_m^0(G) \cong G \square K_m$.
\end{proposition}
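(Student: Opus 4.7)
The plan is to write down the obvious bijection between vertex sets and check that the edge relations agree. Specifically, define $\phi: V(D_m^0(G)) \to V(G \square K_m) = V(G) \times V(K_m)$ by $\phi(u_i) = (u,i)$, where $u_i$ denotes the copy of $u \in V(G)$ sitting in the $i$-th copy $G_i$. Since the $V(G_i)$ partition $V(D_m^0(G))$ and $V(K_m) = \{1,\dots,m\}$, $\phi$ is a bijection; the work is checking that $\phi$ and $\phi^{-1}$ preserve adjacency.

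Next I would split the edge check into the two cases that appear in both constructions. For a ``within-copy'' pair $u_i v_i \in V(G_i)$, the second clause of \cref{def: Shadow} says $u_i v_i$ is an edge of $D_m^0(G)$ iff $uv \in E(G)$; on the Cartesian product side, $(u,i)$ and $(v,i)$ are adjacent iff $uv \in E(G)$. So these two conditions match verbatim. For a ``between-copies'' pair $u_i v_j$ with $i \ne j$, the first clause of \cref{def: Shadow} applied with $x = 0$ and the convention $N^0(v) = \{v\}$ says $u_i v_j$ is an edge iff $u \in \{v\}$, i.e.\ iff $u = v$; on the Cartesian product side, $(u,i)$ and $(v,j)$ are adjacent iff $u = v$ and $ij \in E(K_m)$. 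Since $K_m$ is complete, $ij \in E(K_m)$ is automatic from $i \ne j$, so again the conditions coincide.

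Combining the two cases, $u_i v_j$ is an edge of $D_m^0(G)$ iff $\phi(u_i)\phi(v_j) = (u,i)(v,j)$ is an edge of $G \square K_m$, so $\phi$ is a graph isomorphism. There is no real obstacle here; the only point that needs any care is remembering that the first (cross-copy) clause of the shadow definition was written under the assumption $x \geq 1$, and invoking the extended convention $N^0(v) = \{v\}$ discussed just before the proposition to reuse it at $x = 0$.
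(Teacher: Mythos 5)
Your proof is correct and matches the paper's reasoning: the paper gives no formal proof, instead treating the proposition as a restatement of the preceding paragraph's observation that, under the convention $N^0(v) = \{v\}$, each clone of $v$ is adjacent exactly to the other clones of $v$ across copies and to the $G$-neighbors within its own copy. Your explicit bijection $\phi(u_i) = (u,i)$ and two-case edge check simply formalize that same description.
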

\begin{corollary}\label{cor:0Shadow}
    If $G \in ER(n,d,\lambda)$ then $H = D_m^0(G)$ is of order $mn$ and regular of degree $m-1+d$. If $m > 1$ then $H$ is edge-regular if and only if $m-2 = \lambda$, in which case $H \in ER(mn,m-1+d,m-2) = ER(mn,m-1+d,\lambda)$.
\end{corollary}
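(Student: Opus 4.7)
The plan is to lean on \cref{prop:Cartesian} and translate the statement into a claim about the Cartesian product $G \square K_m$. The order and regularity assertions then require essentially no work: $|V(G \square K_m)| = n\cdot m$, and by the standard degree formula for Cartesian products, each vertex $(v,i)$ has degree $\deg_G(v) + \deg_{K_m}(i) = d + (m-1)$, which is independent of $(v,i)$.

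For the edge-regularity equivalence, I would partition the edges of $G \square K_m$ into two disjoint classes: \emph{$G$-edges} of the form $(u,i)(v,i)$ with $uv \in E(G)$, and \emph{$K_m$-edges} of the form $(v,i)(v,j)$ with $i \neq j$. Using the Cartesian-product adjacency rule, I would do a short case analysis to compute the common-neighbor count in each class. For a $G$-edge, a common neighbor $(w,k)$ must simultaneously satisfy adjacency to $(u,i)$ and to $(v,i)$; since $u \neq v$, the ``$K_m$-rule'' branches are incompatible across the two endpoints, so $(w,k) = (w,i)$ with $w \in N_G(u)\cap N_G(v)$, giving exactly $\lambda$ common neighbors. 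For a $K_m$-edge, a symmetric case check forces any common neighbor to have the form $(v,k)$ with $k \notin \{i,j\}$, giving exactly $m-2$ common neighbors. Crucially, there are no ``mixed'' common neighbors that use the $K_m$-rule on one endpoint and the $G$-rule on the other; this rules-out is the one subtle point, but it falls out of the case analysis above.

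With these two counts in hand, the equivalence is immediate. Provided $G$ has at least one edge (so that both edge classes are realized in $H$), edge-regularity of $H$ forces the two counts to agree, which is exactly the condition $\lambda = m-2$; conversely, when $\lambda = m-2$ both classes of edges have $\lambda = m-2$ common neighbors, so $H \in ER(mn, m-1+d, m-2)$ as claimed. The only residual case to mention is $d=0$, where $G$ has no edges, $\lambda$ is only vacuously defined, and $H$ is a disjoint union of $n$ copies of $K_m$, trivially edge-regular with parameter $m-2$; this matches the stated conclusion under the natural reading $\lambda = m-2$.
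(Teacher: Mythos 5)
Your proof is correct and follows essentially the same route the paper intends: the paper leaves the corollary unproved, deriving it from \cref{prop:Cartesian} and the observation that each clone of $v$ is adjacent in other copies only to the $m-1$ other clones of $v$, which is exactly your two-class common-neighbor count ($\lambda$ on $G$-edges, $m-2$ on $K_m$-edges). Your explicit case analysis, including the check that no ``mixed'' common neighbors occur and the remark about the degenerate $d=0$ case, simply fills in details the paper takes as read.
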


Surprisingly, \cref{thm:charShadowER} holds when $x = 0$. Clearly, for any $G$, conditions 1 and 2 are met with $d_0 = 1$ and $\lambda_0 = 0$. For condition 3, observe that $w \in N^0(v)$ implies that $w = v$. Then the equation in condition 3 collapses to $0 + 0 + (m-2)(1) = \lambda + 0$, or $m - 2 = \lambda$, which is precisely the necessary and sufficient condition for $D_m^0(G)$ to be edge-regular when $G$ is edge-regular (\cref{cor:0Shadow}).

We now show how a class of edge-regular graphs, under the shadow graph operation, is used to build larger edge-regular graphs. Let $C_n$ denote the cycle graph on $n$ vertices, $d(u,v)$ denote the distance between two vertices $u$ and $v$, and let $r(G)$ denote the \textit{radius} of a graph $G$. $r(G)$ is the minimum \textit{eccentricity} among the vertices of $G$. The eccentricity $\epsilon(v)$ of $v \in V(G)$ is the maximum of the distances in $G$ from $v$ to vertices of $G$ (these definitions require $G$ to be connected). It is well known that, for $n \geq 3$, $r(C_n) = \lfloor \frac{n}{2} \rfloor$.

\begin{corollary}
    $D_m^x(C_n)$ is edge-regular for all integers $n \geq 3$, $m \geq 2$, $1 \leq x \leq r(C_n)$ except in either of the following cases for some integer $k \geq 2$:
    \begin{enumerate}
        \item $n = 3k, x = k, m \not= 2$.
        \vspace{-.3cm}
        \item $n = 2k+1, x = k, m \not= 3$.
    \end{enumerate}
\end{corollary}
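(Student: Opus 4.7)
The plan is to apply \cref{thm:charShadowER} to $G = C_n$ and determine which $(n, x, m)$ make all three conditions hold. For $n = 3$ the only choice is $x = 1$, and the standard $x = 1$ shadow result from \cite{deleo} already yields edge-regularity; so I restrict attention to $n \geq 4$, where $C_n \in ER(n, 2, 0)$. The dihedral automorphism group of $C_n$ is transitive on vertices and on edges, and it preserves distance, so conditions~1 and~2 of \cref{thm:charShadowER} hold automatically: $d_x = |N^x(v)|$ is $2$ for $x < n/2$ and $1$ for $x = n/2$, and $\lambda_x = |N^x(u) \cap N^x(v)|$ is constant over edges $uv$. Only condition~3 requires genuine verification.

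For condition~3 I would place $C_n$ on $\mathbb{Z}_n$, use the rotational symmetry to take $v = 0$, and use the reflection symmetry fixing $v$ to reduce $N^x(v) = \{x, -x\}$ to the single representative $w = x$ (with $w = n/2$ in the boundary case $x = n/2$, which requires $n$ even). The three counts in condition~3 then become explicit set intersections in $\mathbb{Z}_n$: $\{x, -x\} \cap \{x-1, x+1\}$, $\{1, -1\} \cap \{0, 2x\}$, and $\{x, -x\} \cap \{0, 2x\}$. A brief modular analysis, restricted to the range $1 \leq x \leq n/2$, shows each intersection is empty except under one of two coincidences: $n = 3x$ gives $-x \equiv 2x$ and hence $|N^x(v) \cap N^x(w)| = 1$; and $n = 2x+1$ gives $2x \equiv -1$ and hence $|N^x(v) \cap N(w)| = |N(v) \cap N^x(w)| = 1$. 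A parallel computation shows $\lambda_x = 1$ precisely when $n = 2x+1$ (so the two elements of $N^x(v)$ are adjacent) and $\lambda_x = 0$ otherwise, including when $n = 3x$ or $x = n/2$.

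Plugging these values into condition~3 with $\lambda = 0$ gives $0 = 0$ whenever neither coincidence applies (which absorbs the boundary case $x = n/2$). When $n = 3x$ (so $n = 3k, x = k$), condition~3 becomes $(m-2) \cdot 1 = 0$, forcing $m = 2$; when $n = 2x+1$ (so $n = 2k+1, x = k$), condition~3 becomes $1 + 1 + 0 = (m-1) \cdot 1$, forcing $m = 3$. These are exactly the listed exceptions, under the identification $x = k$. The hypothesis $k \geq 2$ ensures $n \geq 4$ (so the $\lambda = 0$ computation applies) and also keeps the two coincidences disjoint, since $3x = 2x+1$ forces $x = 1$.

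The main obstacle is the modular bookkeeping: verifying that for each of the three intersections, no other $x \in [1, n/2]$ produces a nontrivial count. This amounts to solving $2x \equiv \pm 1$ and $3x \equiv 0$ in $\mathbb{Z}_n$ with the range restriction, which is straightforward but must be done carefully so that no spurious exception is missed. Once this is settled, the conclusion follows mechanically from \cref{thm:charShadowER}.
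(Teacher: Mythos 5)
Your proposal is correct and takes essentially the same approach as the paper: both verify the three conditions of \cref{thm:charShadowER} for $C_n$ by computing the relevant neighborhood intersections and isolating the two coincidences $n = 3x$ (forcing $m=2$) and $n = 2x+1$ (forcing $m=3$). The only cosmetic difference is that the paper organizes the computation by the parity of $n$, whereas you solve the congruences $2x \equiv \pm 1$ and $3x \equiv 0$ uniformly in $\mathbb{Z}_n$.
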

\begin{proof}
    Consider $n \equiv 0 \pmod 2$, so $C_n$ is an even cycle with radius $\frac{n}{2}$. Then for $1 \leq x \leq \frac{n}{2}$, $v \in V(C_n)$, $|N^x(v)| = 1$ if $x = \frac{n}{2}$, or $|N^x(v)| = 2$ if $x < \frac{n}{2}$. Further, for $uv \in E(C_n)$, $|N^x(u) \cap N^x(v)| = 0$. Additionally, for $w \in N^x(v)$, notice that $|N^x(v) \cap N(w)| = |N(v) \cap N^x(w)| = 0$; $|N^x(v) \cap N^x(w)| = 1$ if $x = \frac{n}{3}$, and $|N^x(v) \cap N^x(w)| = 0$ if $x \not= \frac{n}{3}$.
    
    Then by \cref{thm:charShadowER}, for $n \equiv 0 \pmod 2$, $D_m^x(C_n)$ is edge-regular if $x \not= \frac{n}{3}$. When $x = \frac{n}{3}$, $D_m^x(C_n)$ is edge-regular only when $m = 2$.

    Now consider $n \equiv 1 \pmod 2$, so $C_n$ is an odd cycle with radius $\lfloor\frac{n}{2}\rfloor$. Then for $1 \leq x \leq \lfloor\frac{n}{2}\rfloor$, $v \in V(C_n)$, $|N^x(v)| = 2$. Further, for $uv \in E(C_n)$, $|N^x(u) \cap N^x(v)| = 1$ if $x = \lfloor\frac{n}{2}\rfloor$, or $|N^x(u) \cap N^x(v)| = 0$ if $x < \lfloor\frac{n}{2}\rfloor$. Additionally, for $w \in N^x(v)$, notice that $|N^x(v) \cap N(w)| = |N(v) \cap N^x(w)| = 1$ if $x = \lfloor\frac{n}{2}\rfloor$, or $|N^x(v) \cap N(w)| = |N(v) \cap N^x(w)| = 0$ if $x \not= \lfloor\frac{n}{2}\rfloor$; $|N^x(v) \cap N^x(w)| = 1$ if $x = \frac{n}{3}$, and $|N^x(v) \cap N^x(w)| = 0$ if $x \not= \frac{n}{3}$.
    
    Then by \cref{thm:charShadowER}, for $n \equiv 1 \pmod 2$, $D_m^x(C_n)$ is edge-regular if $x \not\in \{\frac{n}{3}, \lfloor\frac{n}{2}\rfloor\}$. If $x = \frac{n}{3}$, $D_m^x(C_n)$ is edge-regular only when $m = 2$. If $x = \lfloor\frac{n}{2}\rfloor$, $D_m^x(C_n)$ is edge-regular only when $m = 3$.
\end{proof}

\begin{figure}[h]
    \centering
    \includegraphics[width = .25\paperwidth]{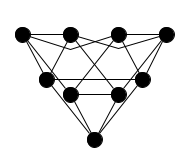}
    \caption{An example of a 0-distance shadow, $D_3^0(K_3) \in ER(9,4,1)$.}
    \label{fig:D03OfK3}
\end{figure}

\begin{theorem}\label{thm:charShadow}
Suppose $m,x \geq 1$ are integers, $G$ is a simple, connected graph of order $n$ and $D_m^x(G)$ is the $(m,x)$-shadow of $G$. Consider the following conditions for some nonnegative integers $d'_x$, $\lambda'_x$, and $\mu'_x$:\\

1. For all $v \in V(G)$, $|N(v)| + (m-1)|N^x(v)| = d'_x$\\

2. For all $u,v \in V(G)$ such that $u \sim v$ in $G$,
\begin{align*}
    |N(u) \cap N(v)| + (m-1)|N^x(u) \cap N^x(v)| = \lambda'_x
\end{align*}

3. For all $v,w \in V(G)$ such that $w \in N_G^x(v)$,
\begin{align*}
    |N(v) \cap N^x(w)| + |N^x(v) \cap N(w)| + (m-2)|N^x(v) \cap N^x(w)| = \lambda'_x
\end{align*}

4. For all $u,v \in V(G)$ such that $u \not\sim v$ in $G$,
\begin{align*}
    |N(u) \cap N(v)| + (m-1)|N^x(u) \cap N^x(v)| = \mu'_x
\end{align*}

5. For all $v,w \in V(G), w \not\in N^x(v)$,
\begin{align*}
    |N(v) \cap N^x(w)| + |N^x(v) \cap N(w)|
    + (m-2)|N^x(v) \cap N^x(w)| = \mu'_x
\end{align*}

Condition 1 is met if and only if $D_m^x(G)$ regular of degree $d'_x$.

Conditions 1, 2, and 3 are met if and only if $D_m^x(G) \in ER(mn,d'_x,\lambda'_x)$.

All conditions are met if and only if $D_m^x(G) \in SR(mn,d'_x,\lambda'_x,\mu'_x)$.
\end{theorem}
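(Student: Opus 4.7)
The plan is to prove the three equivalences by a case analysis on the two kinds of vertex pairs in $D_m^x(G)$: those lying in a single copy of $G$ and those spread across two distinct copies. All three parts rest on the same enumeration of neighbors and common neighbors carried out in the proof of \cref{thm:charShadowER}; the novelty is only that we drop the assumption that $G$ is edge-regular and instead read off the necessary and sufficient conditions directly from the definition.

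First I would compute the degree of an arbitrary vertex $v_i \in V(D_m^x(G))$. By \cref{def: Shadow}, $v_i$ is adjacent to each $u_i$ with $u \in N_G(v)$ (contributing $|N(v)|$ neighbors in copy $i$) and, for each of the other $m-1$ copies $j$, to each $u_j$ with $u \in N_G^x(v)$ (contributing $|N^x(v)|$ neighbors per such copy). Hence $\deg(v_i) = |N(v)| + (m-1)|N^x(v)|$, and $D_m^x(G)$ is regular of degree $d'_x$ iff condition 1 holds for every $v \in V(G)$.

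For edge-regularity I would enumerate common neighbors separately for the two kinds of edges. For a same-copy edge $u_i v_i$ (forcing $u \sim v$ in $G$), the common neighbors inside copy $i$ correspond to $N(u) \cap N(v)$, and in each of the remaining $m-1$ copies $j$ a common neighbor $w_j$ requires $w \in N^x(u) \cap N^x(v)$; the total is the left-hand side of condition 2. For a cross-copy edge $v_i w_j$ (forcing $w \in N_G^x(v)$), a common neighbor in copy $i$ is some $u_i$ with $u \in N(v) \cap N^x(w)$, symmetrically in copy $j$ one sees $|N^x(v) \cap N(w)|$ choices, and in each of the other $m-2$ copies $k$ a common neighbor $u_k$ requires $u \in N^x(v) \cap N^x(w)$; the total is the left-hand side of condition 3. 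So edge-regularity is equivalent to condition 1 together with conditions 2 and 3, where the same constant $\lambda'_x$ must appear in both, since a single $\lambda$ has to govern every edge of $D_m^x(G)$.

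The strong-regularity equivalence follows from the identical enumeration applied to non-adjacent pairs: a same-copy non-edge $u_i v_i$ (with $u \not\sim v$ in $G$) produces the count on the left of condition 4, and a cross-copy non-edge $v_i w_j$ (with $w \not\in N_G^x(v)$, which also covers the case $v = w$ since $v \not\in N^x(v)$) produces the count on the left of condition 5, via the same split between the copy holding $v$, the copy holding $w$, and the remaining $m-2$ copies. The verification is largely bookkeeping; the main subtlety to be careful about is that conditions 2 and 3 (and likewise 4 and 5) must share a single constant, not merely be individually constant, because one $\lambda'_x$ (resp.\ $\mu'_x$) must govern every edge (resp.\ non-edge) of $D_m^x(G)$ regardless of whether the endpoints live in one copy of $G$ or in two.
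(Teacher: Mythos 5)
Your proposal is correct and follows essentially the same route as the paper's proof: compute the degree of a vertex as $|N(v)|+(m-1)|N^x(v)|$, then enumerate common neighbors of same-copy and cross-copy pairs (adjacent for edge-regularity, non-adjacent for strong regularity) by splitting over the copy containing $v$, the copy containing $w$, and the remaining $m-2$ copies. Your added remarks---that conditions 2 and 3 (resp.\ 4 and 5) must share a single constant, and that condition 5 covers clone pairs since $v\notin N^x(v)$ for $x\ge 1$---are correct refinements of details the paper leaves implicit.
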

\begin{proof}
    Suppose condition 1 holds. For all $v_i \in V(D_m^x(G))$, $v_i$ is adjacent to $N_G(v_i)$ in one copy of $G$ and adjacent to $N_G^x(v_i)$ in $m-1$ copies of $G$. Then $|N_{D_m^x(G)}(v_i)| = |N(v_i)| + (m-1)|N^x(v_i)| = d'_x$. So $D_m^x(G)$ is regular of degree $d'_x$.

    Now suppose conditions 1, 2, and 3 hold. As stated previously, if condition 1 holds, then $D_m^x(G)$ is regular. There are two ``types" of adjacencies in $D_m^x(G)$. One type are adjacencies between vertices in the same copy of $G$. Consider two vertices of this type, say $u$ and $v$. Then as $u$ and $v$ share $\lambda$ neighbors in the same copy of $G$ and $x$-distance neighbors in $m-1$ copies of $G$, $|N_{D_m^x(G)}(u) \cap N_{D_m^x(G)}(v)| = \lambda + (m-1)|N^x(u) \cap N^x(v)| = \lambda'_x$ by condition 2.

    The second type of adjacencies are between vertices in distinct copies of $G$. Consider two vertices of this type, say $v$ and $w'$, where $w'$ is a copy of $w$, which is in the same copy of $G$ as $v$. That is, $w \in N^x(v)$. Then in the copy of $G$ containing $v$, the number of common neighbors of $v$ and $w'$ is the number of neighbors of $v$ that are $x$-distance neighbors of $w$. In the copy of $G$ containing $w'$, the number of common neighbors is neighbors of $w$ that are $x$-distance neighbors of $v$. In the remaining $m-2$ copies of $G$ not containing $v$ or $w'$, the common neighbors are $x$-distance neighbors of both $v$ and $w$. Then $|N_{D_m^x(G)}(v) \cap N_{D_m^x(G)}(w')| = |N^x(v) \cap N(w)| + |N(v) \cap N^x(w)| + (m-2)|N^x(v) \cap N^x(w)| = \lambda'_x$ by condition 3.

    Thus, as all pairs of adjacent vertices have $\lambda'_x$ common neighbors, $D_m^x(G)$ is edge-regular.

    Now suppose all conditions are met. As stated previously, since conditions 1, 2, and 3 are met, $D_m^x(G) \in ER(mn,d'_x,\lambda'_x)$.

    Consider non-adjacent vertices in the same copy of $G$, say $u$ and $v$. In the same copy of $G$, the number of common neighbors is $|N(u) \cap N(v)|$. In the other $m-1$ copies of $G$, the number of common neighbors is $|N^x(u) \cap N^x(v)|$. So $|N_{D_m^x(G)}(u) \cap N_{D_m^x(G)}(v)| = |N(u) \cap N(v)| + (m-1)|N^x(u) \cap N^x(v)| = \mu'_x$ by condition 4.

    Consider non-adjacent vertices in distinct copies of $G$, say $v$ and $w'$, where $w'$ is a copy of $w \in V(G)$. That is, $w \not\in N_G^x(v)$. In the copy of $G$ containing $v$, the number of common neighbors of $v$ and $w'$ is the number of neighbors of $v$ in the $x$-distance neighborhood of $w$. In the copy of $G$ containing $w'$, the number of common neighbors is the number of neighbors of $w$ in the $x$-distance neighborhood of $v$. In the remaining $m-2$ copies of $G$, the number of common neighbors is the number of $x$-distance neighbors of $v$ in the $x$-distance neighborhood of $w$. So $|N_{D_m^x(G)}(v) \cap N_{D_m^x(G)}(w')| = |N^x(v) \cap N(w)| + |N(v) \cap N^x(w)| + (m-2)|N^x(v) \cap N^x(w)| = \mu'_x$ by condition 5.

    As all non-adjacent vertices in $D_m^x(G)$ share $\mu'_x$ common neighbors, $G \in SR(mn,d'_x,\lambda'_x,\mu'_x)$.

    The converse is straightforward.
\end{proof}

\begin{figure}[h]
    \centering
    \includegraphics[width = .18\paperwidth]{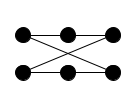}
    \caption{$D_2^2(P_3) \cong C_6 \in ER(6,2,0)$}
    \label{fig:D22OfP3}
\end{figure}

Theorem \ref{thm:charShadow} removes the restriction that $G$ is edge-regular, and allows for any simple, connected graph.  Consider the $(2,2)$-shadow of the path graph on 3 vertices, which is non-regular, shown in figure \ref{fig:D22OfP3}. $D_2^2(P_3) \cong C_6 \in ER(6,2,0)$, which satisfies conditions 1, 2, and 3 in the above theorem.

While the above edge-regular example is an immediate result of \cref{thm:charShadow}, it is unknown for what graphs $G$ and parameters $m,x$ yield strongly regular graphs from $D_m^x(G)$.

\section{$(m,X)$-Shadows}

To further generalize, we consider multiple distances to be used in the definition of the shadow of the graph as opposed to a single distance. This new multi-distance shadow generalizes Definition \ref{def: Shadow}, formally defined below.

\begin{definition}\label{def: MultiShadow}
    Given a finite graph $G$, $X = \{x_1,\dots,x_p\}$, $1 \leq x_i < \dots < x_p$, and $m \geq 2$, the $(m,X)$\textit{-shadow of} $G$, denoted $D_m^X(G)$, is the simple graph whose vertices are in $m$ distinct copies of $G$, say $G_1, G_2, \dots, G_m$; $V(D_m^X(G)) = \bigcup_{i=1}^{m}V(G_i)$ and $E(D_m^X(G)) = \{u_iv_j | i \not= j, u_i \in V(G_i), v_j \in V(G_j), u \in \bigcup_{k = 1}^{p}N_G^{x_k}(v)\} \cup \{u_iv_i | u_i,v_i \in V(G_i), uv \in E(G)\}$.
\end{definition}

As with the case of $D_m^x(G)$ in \cref{thm:charShadowER}, an immediate question arises as to when edge-regularity is preserved in $D_m^X(G)$. The following theorem characterizes when $D_m^X(G)$ is edge-regular, given that $G$ is edge-regular.

\begin{theorem}\label{thm:charMultShadowER}
    Given $G \in ER(n,d,\lambda)$ and $X = \{x_1,x_2,\dots,x_p\}$, then $D_m^X(G)$ is edge-regular if and only if the following conditions are met for some integers $\bar{d}$ and $\bar{\lambda}$:\\

    1. For all $v \in V(G)$, $\displaystyle\sum_{i = 1}^{p}|N^{x_i}(v)| = \Bar{d}$
    
    2. For all $u,v \in V(G)$ such that $u \sim v$ in $G$, $\displaystyle\sum_{i = 1}^{p}\displaystyle\sum_{j = 1}^{p}|N^{x_i}(u) \cap N^{x_j}(v)| = \Bar{\lambda}$
    
    3. For all $v,w \in V(G)$ such that $w \in \displaystyle \bigcup_{q = 1}^{p}N^{x_q}(v)$,\\ $\displaystyle \sum_{i = 1}^{p}|N(v) \cap N^{x_i}(w)| + \sum_{j = 1}^{p}|N^{x_j}(v) \cap N(w)| + (m-2)\sum_{k=1}^{p}\sum_{l = 1}^{p}|N^{x_k}(v) \cap N^{x_l}(w)| = \lambda + (m-1)\Bar{\lambda}$.
\end{theorem}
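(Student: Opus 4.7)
The plan is to follow the template of the proof of \cref{thm:charShadowER}, tracking degree and common-neighbor counts in $D_m^X(G)$ and exploiting the fact that the exact-distance neighborhoods $N^{x_1}(v),\ldots,N^{x_p}(v)$ are pairwise disjoint (since a vertex has a unique distance to $v$). This disjointness is what lets the various set-theoretic unions be replaced by plain summations, and it is the one place where I have to be slightly careful.

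First I would do the degree calculation. A vertex $v_i \in V(G_i) \subset V(D_m^X(G))$ is adjacent to $N_G(v)$ inside its own copy and, in each of the other $m-1$ copies, to those clones of vertices in $\bigcup_{k=1}^{p} N^{x_k}(v)$. Pairwise disjointness gives $|\bigcup_{k=1}^p N^{x_k}(v)| = \sum_{k=1}^p |N^{x_k}(v)|$, so $\deg_{D_m^X(G)}(v_i) = d + (m-1)\sum_{k=1}^p |N^{x_k}(v)|$. Since $G$ is already regular of degree $d$, regularity of $D_m^X(G)$ is equivalent to $\sum_{k=1}^p |N^{x_k}(v)|$ being constant, i.e.\ condition~1.

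Next I would compute the common-neighbor count for the two types of edges in $D_m^X(G)$, exactly as in \cref{thm:charShadowER}. For an edge $u_i v_i$ lying inside a single copy $G_i$, the vertices $u,v$ share $\lambda$ neighbors in $G_i$ (since $G$ is edge-regular); in each of the other $m-1$ copies they share those clones of vertices lying in $\bigcup_k N^{x_k}(u) \cap \bigcup_l N^{x_l}(v)$, which by disjointness equals $\sum_{k,l} |N^{x_k}(u) \cap N^{x_l}(v)|$. Thus the total count is $\lambda + (m-1)\sum_{k,l}|N^{x_k}(u)\cap N^{x_l}(v)|$, and requiring this to be independent of the edge yields condition~2, with the common value called $\bar{\lambda}$. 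For an edge $v_i w_j$ with $i \neq j$ (so $w \in \bigcup_q N^{x_q}(v)$), the three kinds of contributing copies give respectively: $\sum_i |N(v) \cap N^{x_i}(w)|$ (inside $G_i$, where the shared neighbor lies in $N_G(v)$ and must be an $x$-distance-neighbor of $w$ for some $x \in X$), $\sum_j |N^{x_j}(v) \cap N(w)|$ (inside $G_j$, symmetrically), and $\sum_{k,l}|N^{x_k}(v)\cap N^{x_l}(w)|$ in each of the remaining $m-2$ copies. Setting this sum equal to $\lambda + (m-1)\bar{\lambda}$ gives condition~3.

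The forward direction consists of reading these identities as necessary conditions on $G$; the converse consists of reading them as sufficient conditions, observing that once conditions 1--3 hold, every vertex has degree $d + (m-1)\bar{d}$ and every edge (of either type) has exactly $\lambda + (m-1)\bar{\lambda}$ common neighbors, so $D_m^X(G)$ is edge-regular. The main obstacle is purely bookkeeping: making sure the double sums in conditions~2 and~3 are indexed correctly and that the disjointness of the exact-distance neighborhoods is invoked each time a union is expanded into a sum. No ideas beyond those already used for the single-distance case are required.
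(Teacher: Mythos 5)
Your proposal is correct and follows essentially the same route as the paper's proof: compute the degree and the two types of common-neighbor counts in $D_m^X(G)$, and read conditions 1--3 off as exactly the requirements that these counts be constant. Your explicit appeal to the pairwise disjointness of the exact-distance neighborhoods to justify replacing unions by sums is a detail the paper leaves implicit, but it does not change the argument.
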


\begin{proof}
    Suppose $G$ and $D_m^X(G)$ are edge-regular, where $m \geq 1$ and $X = \{x_1,\dots,x_p\}$. Then in $D_m^X(G)$,
    \begin{align*}
        \deg{(v)} &= d + (m-1)|N^{x_1}_G(v)| + (m-1)|N^{x_2}_G(v)| + \cdots + (m-1)|N^{x_p}_G(v)|\\
        &= d + (m-1)\sum_{i = 1}^{p}|N^{x_i}(v)|
    \end{align*}
    As $D_m^X(G)$ is regular by assumption, $\deg(u) = \deg(v)$ for all $u,v \in V(D_m^X(G)$. So,
    \begin{align*}
        d + (m-1)\sum_{i = 1}^{p}|N^{x_i}(u)| &= d + (m-1)\sum_{i = 1}^{p}|N^{x_i}(v)|\\
        \sum_{i = 1}^{p}|N^{x_i}(u)| &= \sum_{i = 1}^{p}|N^{x_i}(v)|
    \end{align*}
    Thus, $\displaystyle\sum_{i = 1}^{p}|N^{x_i}(v)| = \bar{d}$ for all $v \in V(D_m^X(G))$, so condition 1 is met.

    Now define $\lambda_{i,j}(u,v) = |N^{x_i}_G(u) \cap N^{x_j}_G(v)|$. Then for $uv \in E(G)$,
    \begin{align*}
        |N_{D_m^X(G)}(u) \cap N_{D_m^X(G)}(v)| &= \lambda + (m-1)\lambda_{1,1}(u,v) + \cdots + (m-1)\lambda_{1,p}(u,v)\\
        &\hspace{.98cm}+ \cdots\\
        &\hspace{.98cm}+ (m-1)\lambda_{p,1}(u,v) + \cdots + (m-1)\lambda_{p,p}(u,v)\\
        &= \lambda + (m-1)\sum_{i = 1}^{p}\sum_{j = 1}^{p}\lambda_{i,j}(u,v)
    \end{align*}    
    As $D_m^X(G)$ is edge-regular by assumption,
    \begin{align*}
        |N_{D_m^X(G)}(u) \cap N_{D_m^X(G)}(v)| = |N_{D_m^X(G)}(y) \cap N_{D_m^X(G)}(z)|
    \end{align*} for all $uv, yz \in E(G)$. So,
    \begin{align*}
        \lambda + (m-1)\sum_{i = 1}^{p}\sum_{j = 1}^{p}\lambda_{i,j}(u,v) &= \lambda + (m-1)\sum_{i = 1}^{p}\sum_{j = 1}^{p}\lambda_{i,j}(y,z)\\
        \sum_{i = 1}^{p}\sum_{j = 1}^{p}\lambda_{i,j}(u,v) &= \sum_{i = 1}^{p}\sum_{j = 1}^{p}\lambda_{i,j}(y,z)
    \end{align*}
    Thus, for all $uv \in E(G)$, $\displaystyle\sum_{i = 1}^{p}\sum_{j = 1}^{p}\lambda_{i,j}(u,v) = \bar{\lambda}$, so condition 2 is met.

    Now consider adjacent vertices of $D_m^X(G)$ in different copies of $G$, say $v$ and $w'$, where $w'$ is a copy of a vertex $w \in \bigcup_{q = 1}^{p}N_{G}^{x_q}(v)$. Then $v$ and $w'$ share $\sum_{i = 1}^{p}|N_G(v) \cap N_G^{x_i}(w)|$ vertices in the copy of $G$ containing $v$. Likewise, $v$ and $w'$ share $\sum_{j=1}^{p}|N_G^{x_j}(v) \cap N_G(w)|$ vertices in the copy of $G$ containing $w'$. In each of the remaining $m - 2$ copies of $G$, $v$ and $w'$ share $\sum_{k=1}^{p}\sum_{l = 1}^{p}|N_G^{x_k}(v) \cap N_G^{x_l}(w)|$ vertices. As $D_m^X(G)$ is edge-regular by assumption, $|N_{D_m^X(G)}(v) \cap N_{D_m^X(G)}(w')|$ is equal to the number of vertices shared by two adjacent vertices in the same copy of $G$. Thus,
    \begin{align*}
        |N_{D_m^X(G)}(v) \cap N_{D_m^X(G)}(w')| &= \sum_{i = 1}^{p}|N_G(v) \cap N_G^{x_i}(w)| + \sum_{j=1}^{p}|N_G^{x_j}(v) \cap N_G(w)|\\
        &+ (m-2)\sum_{k=1}^{p}\sum_{l = 1}^{p}|N_G^{x_k}(v) \cap N_G^{x_l}(w)|\\
        &= \lambda + (m-1)\bar{\lambda} \text{\hspace{.5cm}(by condition 2)}
    \end{align*}
    Thus, condition 3 is met.

    The converse is straightforward.
\end{proof}

\begin{figure}[h]
    \centering
    \includegraphics[width=.4\paperwidth]{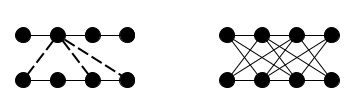}
    \caption{Distance 1 and 2 vertices from one vertex of $P_4$ to another copy of $P_4$ (left).
    Edges added between distance 1 and 2 vertices in different copies of $P_4$ to obtain $D_2^{1,2}(P_4)$ (right).}
    \label{fig:D122OfP4}
\end{figure}

\section{Generalized Graph Shadows}

\begin{definition}\label{def: GeneralizedShadow}
    Given finite graphs $G,H$ and $x \geq 1$, $m = |V(H)|$; let the vertices of $H$ be ordered $w_1,\dots,w_m$. The $(H,x)$\textit{-shadow of} $G$, denoted $D_m^x(G,H)$, is the simple graph whose vertices are in $m$ distinct copies of $G$, say $G_1, G_2, \dots, G_m$. $V(D_m^x(G)) = \bigcup_{i=1}^{m}V(G_i)$ and edge set $E(D_m^x(G)) = \{u_iv_j | i \not= j, u_i \in V(G_i), v_j \in V(G_j), w_i,w_j \in V(H), w_iw_j \in E(H), u \in N_G^x(v)\} \cup \{u_iv_i | u_i,v_i \in V(G_i), uv \in E(G)\}$.
\end{definition}

The definition of $D_m^x(G,H)$ refers to an ordering of $V(H)$, but the isomorphism class of $D_m^x(G,H)$ does not depend on the ordering. Permuting the vertex set of $H$ results in a rearrangement of the list $G_1,\dots,G_m$; while the new $D_m^x(G,H)$ obtained thereby is not identical to the old, the two are obviously isomorphic, as the ends of the edges between $G_i$ and $G_j$, $i \not= j$, in the old are dragged along to the new positions of these copies of $G$.

Note that by this definition, if $|V(G)| = n$, $D_m^x(G) = D_m^x(G,K_n)$. For an example, refer to figure \ref{fig:D4OfP3C4}. The following theorem gives a characterization for when $D_m^x(G,G)$ is edge-regular.

\begin{figure}[h]
    \centering
    \includegraphics[width=.30\paperwidth]{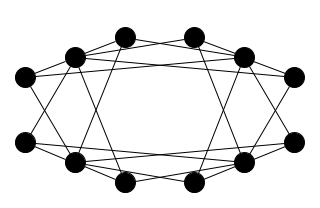}
    \caption{$D_4^1(P_3,C_4)$}
    \label{fig:D4OfP3C4}
\end{figure}

\begin{theorem}\label{thm: GeneralizedGGShadow}
    Given $G \in ER(n,d,\lambda)$, then $D_m^x(G,G)$ is edge-regular if and only if the following conditions are met for some integers $\bar{d}$ and $\bar{\lambda}$:\\

    1. For all $v \in V(G)$, $d(1 + |N^x(v)|) = \Bar{d}$.

    2. For all $u,v \in V(G)$ such that $u \sim v$ in $G$, $\lambda + d|N^x(u) \cap N^x(v)| = \Bar{\lambda}$.

    3. For all $v,w \in V(G)$ such that $w \in N_G^x(v),$
    \begin{align*}
        |N^x(v) \cap N(w)| + |N(v) \cap N^x(w)| + \lambda|N^x(v) \cap N^x(w)| = \Bar{\lambda}.
    \end{align*}
\end{theorem}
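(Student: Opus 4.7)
The plan is to adapt the strategy from Theorem \ref{thm:charShadowER}, but with careful bookkeeping for the fact that in $D_m^x(G,G)$ (so $m = n = |V(G)|$), cross-copy edges between $G_i$ and $G_j$ exist only when the corresponding vertices $h_i, h_j$ in the ordering of $V(H) = V(G)$ satisfy $h_i h_j \in E(G)$. In effect, each copy $G_i$ is "wired" to exactly $d$ other copies rather than to all $m-1$ others. The forward and backward implications will then mirror each other; I would write out the forward direction and note that the converse follows by reversing the computation.

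First, I would pick an arbitrary $u_i \in V(D_m^x(G,G))$ and count its degree. It has $d$ neighbors inside $G_i$, and for each of the $d$ indices $j$ with $h_i h_j \in E(G)$, it gains $|N^x(u)|$ new neighbors in $G_j$, giving $\deg(u_i) = d + d|N^x(u)| = d(1+|N^x(u)|)$. Regularity of $D_m^x(G,G)$ forces this quantity to be a constant $\bar d$, which is condition 1. Next, I would consider two adjacent vertices $u_i, v_i$ in the same copy $G_i$: they share $\lambda$ neighbors inside $G_i$, and for each of the $d$ copies $G_j$ with $h_i h_j \in E(G)$, they share $|N^x(u) \cap N^x(v)|$ neighbors. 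Edge-regularity forces $\lambda + d|N^x(u) \cap N^x(v)|$ to equal a constant $\bar\lambda$, which is condition 2.

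Finally — and this is where the main new idea enters — I would analyze adjacent vertices in distinct copies: $v_i \in G_i$ and $w'_j \in G_j$, where $h_i h_j \in E(G)$ and $w \in N^x_G(v)$. In $G_i$ the common neighbors are $N(v) \cap N^x(w)$; in $G_j$ they are $N^x(v) \cap N(w)$. For any third copy $G_k$ with $k \notin \{i,j\}$, the common neighbors are $N^x(v) \cap N^x(w)$ provided that $h_k$ is adjacent to \emph{both} $h_i$ and $h_j$ in $G$. Since $h_i h_j \in E(G)$ and $G \in ER(n,d,\lambda)$, the number of such $k$ is precisely $\lambda$. Hence the total count is
\begin{equation*}
|N(v) \cap N^x(w)| + |N^x(v) \cap N(w)| + \lambda |N^x(v) \cap N^x(w)|,
\end{equation*}
and setting this equal to $\bar\lambda$ from condition 2 yields condition 3.

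The main obstacle — or rather, the crucial observation that makes the theorem work — is identifying that the multiplier on the $|N^x(v) \cap N^x(w)|$ term is $\lambda$ instead of the $m-2$ that appears in Theorem \ref{thm:charShadowER}. This is exactly the place where the edge-regularity of $G$ (playing the role of $H$) is invoked, replacing the "every copy is connected to every other" behavior of the $K_m$ case with a count governed by common neighbors of $h_i$ and $h_j$ in $G$. Once this is recognized, the rest is a direct translation of the argument in Theorem \ref{thm:charShadowER}, and the converse is obtained by reading each of the three identities in reverse to verify that degrees and numbers of common neighbors of adjacent pairs are constant throughout $D_m^x(G,G)$.
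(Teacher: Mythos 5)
Your proposal is correct and follows essentially the same route as the paper's own proof: the same degree count $d(1+|N^x(v)|)$, the same $\lambda + d|N^x(u)\cap N^x(v)|$ count for adjacent pairs within a copy, and the same key observation that two adjacent copies $G_i,G_j$ have exactly $\lambda$ commonly adjacent copies (by edge-regularity of the ``outer'' $G$), which is precisely where the coefficient $\lambda$ in condition 3 comes from. The converse is likewise dispatched as a reversal of the computation, just as in the paper.
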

\begin{proof}
    To prove the forward direction, suppose $G \in ER(n,d,\lambda)$. Then $v \in V(G)$ is adjacent to $d$ vertices in its own copy of $G$ and is adjacent to $|N^x(v)|$ vertices in $d$ other copies of $G$. Then for $v_i \in V(G)$, $|N_{D_m^x(G,G)}(v_i)| = d + d|N^x(v_i)| = d(1 + |N^x(v_i)|) = \Bar{d_i}$. As $G$ is edge-regular, $\Bar{d_i} = \Bar{d}$ for all $i \in [n]$. Thus, condition 1 is met.

    Consider adjacent vertices $u,v$ in the same copy of $G$. Then $u$ and $v$ have $\lambda$ common neighbors in the copy of $G$ containing them, and have $|N^x(u) \cap N^x(v)|$ common neighbors in $d$ copies of $G$. So $|N_{D_m^x(G,G)}(u) \cap N_{D_m^x(G,G)}(v)| = \lambda + d|N^x(u) \cap N^x(v)| = \Bar{\lambda}_{u,v}$. As $G$ is edge-regular, $\Bar{\lambda}_{u,v} = \Bar{\lambda}$ for all pairs $u,v \in V(G)$ such that $u \sim v$. Thus, condition 2 is met.

    Consider adjacent vertices in distinct copies of $G$, say $v$ and $w'$, where $w'$ is a copy of $w$, which is in the same copy of $G$ as $v$. That is, $w \in N^x(v)$. Then in the copy of $G$ containing $v$, the number of common neighbors of $v$ and $w'$ is $|N^x(v) \cap N(w)|$. In the copy of $G$ containing $w'$, the number of common neighbors is $|N(v) \cap N^x(w)|$. These distinct copies of $G$ are mutually adjacent to $\lambda$ other copies of $G$. In these $\lambda$ copies of $G$, $v$ and $w'$ have $|N^x(v) \cap N^x(w)|$ common neighbors. So $|N_{D_m^x(G,G)}(v) \cap N_{D_m^x(G,G)}(w')| = |N^x(v) \cap N(w)| + |N(v) \cap N^x(w)| + \lambda|N^x(v) \cap N^x(w)| = \Bar{\lambda}_{v,w'}$. As $G$ is edge-regular, then $\Bar{\lambda}_{v,w'} = \Bar{\lambda}$ for all pairs of adjacent vertices $v,w'$ in distinct copies of $G$. Thus, condition 3 is met.

    The converse is straightforward.
\end{proof}

\begin{corollary}\label{cor:GeneralizedShadowERGG}
    If $G \in ER(n,d,\lambda)$, then $D_m(G,G)$ is edge-regular.
\end{corollary}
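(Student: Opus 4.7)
The plan is to apply \cref{thm: GeneralizedGGShadow} with $x = 1$, invoking the paper's convention (stated in Section~2) that a missing distance superscript defaults to $x = 1$; under this reading $D_m(G,G) = D_m^1(G,G)$ with $m = |V(G)| = n$, and every instance of $N^x(\cdot)$ in the three conditions of the theorem becomes an ordinary neighborhood $N(\cdot)$. The verification then reduces to substituting the edge-regular parameters $d$ and $\lambda$ of $G$ into the three conditions.

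Conditions~1 and~2 are immediate. For condition~1, the $d$-regularity of $G$ gives $|N^1(v)| = d$ for every $v$, so $d(1+d) =: \bar d$ is a single constant and regularity of the shadow is secured. For condition~2, edge-regularity gives $|N(u) \cap N(v)| = \lambda$ for each edge $uv$, hence $\lambda + d\lambda = \lambda(d+1) =: \bar\lambda$, a constant depending only on $d$ and $\lambda$. For condition~3, take any $v, w \in V(G)$ with $w \in N^1(v) = N(v)$; since $v \sim w$ in $G$, each of the three intersection cardinalities $|N^1(v) \cap N(w)|$, $|N(v) \cap N^1(w)|$, and $|N^1(v) \cap N^1(w)|$ collapses to $|N(v) \cap N(w)| = \lambda$. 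The LHS of condition~3 therefore reduces to an expression in $\lambda$ alone, which one must check agrees with the $\bar\lambda = \lambda(d+1)$ already forced by condition~2.

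The main obstacle is precisely this last reconciliation in condition~3: confirming the algebraic identity between the collapsed $\lambda$-valued intersection terms and the value $\lambda(d+1)$ coming from condition~2, so that both kinds of adjacency in $D_m(G,G)$ (same copy and distinct copies) produce the same common-neighborhood count. Apart from this single bookkeeping step, every other check is a mechanical substitution of the edge-regular parameters of $G$, and once condition~3 is verified, \cref{thm: GeneralizedGGShadow} immediately delivers the edge-regularity of $D_m(G,G)$.
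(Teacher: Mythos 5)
Your reduction to \cref{thm: GeneralizedGGShadow} with $x=1$ and your handling of conditions~1 and~2 match the paper's own proof exactly ($\bar d = d(1+d)$, $\bar\lambda = \lambda(d+1)$). But the proof is not finished: you explicitly defer the verification of condition~3 as ``the main obstacle,'' and that obstacle is genuine. Carrying out the substitution you describe, for $w \in N(v)$ each of the three intersections in condition~3 equals $|N(v)\cap N(w)| = \lambda$, so the left-hand side is $\lambda + \lambda + \lambda\cdot\lambda = 2\lambda + \lambda^2$, whereas condition~2 has already forced $\bar\lambda = \lambda(d+1)$. These agree if and only if $\lambda(\lambda + 1 - d) = 0$, i.e.\ $\lambda = 0$ or $\lambda = d-1$; there is no algebraic identity making them equal for an arbitrary member of $ER(n,d,\lambda)$. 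For instance, $G = K_3 \square K_3 \in ER(9,4,1)$ gives $2\lambda + \lambda^2 = 3$ against $\bar\lambda = 5$: adjacent vertices in the same copy of $G$ have $5$ common neighbors in $D_9^1(G,G)$, while adjacent vertices in distinct copies have only $3$.

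So the missing step cannot be filled in as stated. For what it is worth, the paper's own proof disposes of condition~3 with the single assertion that it ``is trivially met when $x = 1$,'' which is precisely the claim your computation contradicts; given \cref{thm: GeneralizedGGShadow}, the corollary appears to need the extra hypothesis $\lambda \in \{0, d-1\}$ or some other repair. Your write-up correctly isolates where the difficulty lives, but a proof must resolve the reconciliation rather than flag it, and here resolving it shows the required identity fails in general.
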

\begin{proof}
    Given $G \in ER(n,d,\lambda)$, let $x = 1$. Then for a vertex $v \in V(G)$, $d(1 + |N(v)|) = d + (d)^2 = \bar{d}$ for some integer $\bar{d}$. So condition 1 of Theorem \ref{thm: GeneralizedGGShadow} is met. For adjacent vertices $u,v \in V(G)$, $\lambda + d|N(u) \cap N(v)| = \lambda + d\lambda = \bar{\lambda}$ for some integer $\bar{\lambda}$. So condition 2 of Theorem \ref{thm: GeneralizedGGShadow} is met. Finally, condition 3 of Theorem \ref{thm: GeneralizedGGShadow} is trivially met when $x = 1$. Thus, $D_m(G,G)$ is edge-regular.
\end{proof}

\cref{cor:GeneralizedShadowERGG} justifies a way to construct edge-regular graphs which resembles a recursion-like relation in the generalized graph shadow. It remains to be seen, for any edge-regular graph $H$, if $D_m^x(G,H)$ is edge-regular. Characterizing when $D_m^x(G,H)$ is edge-regular for any simple, connected graphs $G$ and $H$ would generalize a number of results in this paper and would provide a framework for construction of regular and strongly-regular graphs. Under the assumption that this characterization exists, extending it to strongly regular graphs would also be of interest.

\newpage

\bibliography{references}
\bibliographystyle{IEEEtran}

\end{document}